\begin{document}

\newtheorem{theorem}{Theorem}[section]
\newtheorem{proposition}[theorem]{Proposition}
\newtheorem{question}[theorem]{Question}
\newtheorem{definition}[theorem]{Definition}
\newtheorem{corollary}[theorem]{Corollary}
\newtheorem{lemma}[theorem]{Lemma}
\newtheorem{claim}[theorem]{Claim}

\renewcommand{\c}{\mathfrak{c}}
\newcommand{\pr}[1]{\left\langle #1 \right\rangle}
\newcommand{\mH}{\mathcal{H}}
\newcommand{\mK}{\mathcal{K}}
\newcommand{\mR}{\mathcal{R}}
\newcommand{\mG}{\mathcal{G}}
\newcommand{\mA}{\mathcal{A}}
\newcommand{\mV}{\mathcal{V}}
\newcommand{\mU}{\mathcal{V}}
\newcommand{\mP}{\mathcal{P}}
\newcommand{\mB}{\mathcal{B}}
\newcommand{\C}{\mathrm{C}}
\newcommand{\mO}{\mathcal{O}}
\newcommand{\mC}{\mathcal{C}}
\newcommand{\D}{\mathrm{D}}
\renewcommand{\O}{\mathrm{O}}
\newcommand{\K}{\mathrm{K}}
\newcommand{\OD}{\mathrm{OD}}
\newcommand{\Do}{\D_\mathrm{o}}
\newcommand{\sone}{\mathsf{S}_1}
\newcommand{\gone}{\mathsf{G}_1}
\newcommand{\sfin}{\mathsf{S}_\mathrm{fin}}
\newcommand{\gfin}{\mathsf{G}_\mathrm{fin}}
\newcommand{\Em}{\longrightarrow}
\newcommand{\menos}{{\setminus}}
\newcommand{\w}{{\omega}}

\title{On a  game theoretic cardinality bound}
\author{Leandro F. Aurichi \and Angelo Bella}
\thanks{This work was done during a visit of the first author to
the University of Catania, sponsored by GNSAGA. The first author
was partially supported by FAPESP 2013/05469-7}
\dedicatory{Dedicated to Ofelia T. Alas on the occasion of her
70th
  birthday}

\maketitle

\begin{abstract} The main  purpose of the paper is the proof of
a cardinal inequality for a space with points  $G_\delta $,
obtained with the help of a long version of the Menger game. This
result improves a similar one  of Scheepers and Tall.
\end{abstract}

\section{Introduction}
Very soon after the publication in 1969 of the celebrated
Arhangel'ski\u\i's cardinal inequality: $|X|\le 2^{\aleph_0}$,
for any first countable Lindel\"of $T_2$ space $X$,  a lot of
attention was  paid to the possibility of extenting this theorem
to the whole class of spaces with points $G_\delta$.  The problem
turned out to be very non-trivial and the first negative
consistent answer was given by Shelah  \cite{shelah}. Later on, a
simpler example of a Lindel\"of $T_3$ space with points
$G_\delta$ whose cardinality is bigger than  the continuum was
constructed by Gorelic \cite{Gorelic2008}. Therefore, it is
interesting to find conditions under which a space with points
$G_\delta$ has cardinality not exceeding $2^{\aleph_0}$. A result
of this kind was obtained by Scheepers and Tall in 2010
\cite{ScheepersTall} with the help of a topological game.    The
main purpose of this note is to strengthen this result.

\section{Main results}
Before giving the announced strengthening of Scheepers-Tall's
inequality, we would like to present a more general consequence
of the hypothesis that player II has a winning strategy in the
long Rothberger game.

A subset $A$ of $X$ is a $G_\kappa $-set if there exists a family
$\mU$ of $\kappa $-many open sets of $X$ such that $A=\bigcap
\mU$.  The
$G_\kappa $-modification $X_\kappa $ of a space $X$ is
obtained by taking as a base the collection of all $G_\kappa
$-sets of $X$.

We use the standard notation for games: we will denote
by $\gone^\kappa(\mA, \mB)$ the game played by player I and
player II such that, for each inning $\xi < \kappa$, player I
chooses $A_\xi \in \mA$. Then player II chooses $a_\xi \in
A_\xi$. Player II wins if $\{a_\xi: \xi < \kappa\} \in \mB$.

We
will denote by $\O$ the family of all open coverings for a given
space. Thus, $\gone^\kappa(\O, \O)$ means that at each inning
player I chooses an open covering and player II chooses one of
its open members. Player II wins if the collection of open sets
chosen forms a covering.

 Thus,  according to this notation, $\gone^\omega(\O,\O) =
\gone(\O, \O)$ is the classic Rothberger game.

In addition,  for a given space $X$, $\D$ will denote the
collection of all families of open sets whose union is dense in
$X$.
Here  no separation axiom is assumed. As usual $\c=2^{\aleph_0}$.

\begin{theorem}
  Let $X$ be a space. If player II has a winning strategy in the
game $\gone^{\w_1}(\O, \O)$, then $L(X_\c) \leq \c$.
\end{theorem}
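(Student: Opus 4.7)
The plan is to argue by contradiction. Suppose $\mV$ is an open cover of $X_\c$ admitting no subcover of cardinality $\leq \c$. Since $X_\c$ has as a base the $G_\c$-sets of $X$, a shrinking argument reduces to the case that every $V \in \mV$ is itself a $G_\c$-set; fix a representation $V = \bigcap_{\alpha < \c} U_\alpha^V$ with each $U_\alpha^V$ open in $X$.

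Using the winning strategy $\sigma$ for player II, I would then construct a single play of $\gone^{\w_1}(\O, \O)$ in which player I's moves are chosen adaptively, with the aim that the $\w_1$-many $V_\xi \in \mV$ obtained by writing II's responses $O_\xi = \sigma(\mO_0, \ldots, \mO_\xi)$ as $U_{\alpha_\xi}^{V_\xi}$ form the desired subcover of $X$. Recursively on $\xi < \w_1$, set $\mV_\xi = \{V_\eta : \eta < \xi\}$; since $|\mV_\xi| \leq \w_1 \leq \c$, by hypothesis $\bigcup \mV_\xi \neq X$, so pick $x_\xi \in X \setminus \bigcup \mV_\xi$ and have I play
\[
\mO_\xi = \{U_\alpha^V : V \in \mV,\ \alpha < \c,\ x_\xi \in V \text{ or } x_\xi \notin U_\alpha^V\}.
\]
This is an open cover of $X$ (for $y \in V \in \mV$: if $x_\xi \in V$ take any $U_\alpha^V$; otherwise choose $\alpha$ with $x_\xi \notin U_\alpha^V$), and by design any $U_\alpha^V \in \mO_\xi$ containing $x_\xi$ satisfies $x_\xi \in V$.

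Since $\sigma$ wins, after $\w_1$ innings $\bigcup_\xi O_\xi = X$, so each $x \in X$ lies in some $O_{\xi^*}$. The target contradiction is $\bigcup_\xi V_\xi = X$, which exhibits a subcover of $\mV$ of size $\leq \w_1 \leq \c$. The main obstacle, and the technical heart of the proof, is bridging the gap from $x \in O_{\xi^*} = U_{\alpha_{\xi^*}}^{V_{\xi^*}}$ to $x \in V_{\xi^*} = \bigcap_\alpha U_\alpha^{V_{\xi^*}}$: membership of $x$ in a single $U_\alpha^V$ does not imply membership in $V$. I expect the resolution will require strengthening $\mO_\xi$ so as to simultaneously target \emph{all} previously chosen witnesses $\{x_\eta : \eta \leq \xi\}$---a countable set, since $\xi < \w_1$---by selecting, for each $V \in \mV$, a single index $\alpha < \c$ with $U_\alpha^V \cap \{x_\eta : \eta \leq \xi\} \subseteq V$. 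Arranging this will rest on the countable character of each stage together with the $\c$-indexed nature of the $G_\c$-decomposition, and is where I expect the genuine difficulty to reside.
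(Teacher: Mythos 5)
The gap you yourself flag is not a technicality to be patched later; it is the whole content of the theorem, and the fix you sketch does not close it. First, the requested index need not exist: once the witness set $\{x_\eta:\eta\le\xi\}$ is infinite, there is in general no single $\alpha<\c$ with $U_\alpha^V\cap\{x_\eta:\eta\le\xi\}\subseteq V$, because the representation of $V$ is only required to have intersection $V$; you may close it under finite intersections (the members stay open, so they are legal moves), but not under countable ones, and a decreasing representation with the witnesses placed in $U_\alpha^V\setminus V$ defeats any single choice. Second, and more decisively, even granting such targeting the endgame does not close: II's win gives $\bigcup_\xi O_\xi=X$, and a generic point $x\in X$ is not among your countably many witnesses at any stage, so $x\in O_{\xi^*}=U_{\alpha_{\xi^*}}^{V_{\xi^*}}$ still says nothing about $x\in V_{\xi^*}$; note also that the strategy's answer $O_\xi$ need not contain $x_\xi$ at all, so even the one-witness targeting is never activated. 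Nor can you aim instead at exhibiting a point omitted by all responses: in a single adaptive play of length $\w_1$ such a point would have to be known before the play starts, whereas your witnesses are created only as the play unfolds.

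What is missing is the key lemma the paper isolates (its Claim 1, essentially Galvin's trick): for every countable partial play $\phi$ there is a point $x_\phi$ such that \emph{every} open neighbourhood $U$ of $x_\phi$ is realized as $F(\phi\frown\mU)=U$ for some open cover $\mU$ --- otherwise the neighbourhoods witnessing failure would themselves form a cover on which $F$'s answer gives an immediate contradiction. With this, one does not build a single play but a tree of plays indexed by ${}^{<\w_1}\c$: at node $f$ pick $x_f$, pick $G_f$ from the given cover by $G_\c$-sets with $x_f\in G_f$, and for each $\beta<\c$ pick a cover steering the strategy so that its answer is exactly $U_\beta(G_f)$. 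The tree has only $\c$ nodes, so $\{G_f\}$ has size $\le\c$; and if some $p$ were missed by every $G_f$, one chooses at each node the direction $\beta$ with $p\notin U_\beta(G_f)$ and obtains a run in which II, following the winning strategy, never covers $p$ --- a contradiction. The $\c$-fold branching is precisely what replaces the impossible anticipation in your single play: the uncovered point is found after the construction, and the tree already contains the play that defeats it.
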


\begin{proof}
Let $\mG$ be a covering of $X$ by $G_\c$-sets and
for each $G \in \mG$ fix a family $\{U_\beta (G):\beta< \c\}$ of
open subsets of $X$ satisfying
$G=\bigcap\{U_\beta(G):\beta<\c\}$.  Let $F$ be a
winning strategy for player II in $\gone^{\omega_1}(\O,\O)$,
that is a function
$F:\bigcup\{{}^{\alpha +1}\O :\alpha <\omega_1\}\rightarrow
\bigcup \O$, and for any $\phi\in {}^{\alpha +1}\O$ we
have $F(\phi)\in \phi(\alpha )$.

\begin{claim}\label{claim 1}
For any $\alpha <\omega_1$ and any  $\phi\in {}^\alpha \omega$
there exists a point $x_\phi \in X$ such that for each open
neighbourhood $U$ of $x_\phi$ we may find an open covering $\mU$
such that $U=F(\phi\frown \mU)$.
\end{claim}

\begin{proof}
Assume the contrary and for each $x\in
X$ fix an open neighbourhood $U_x$ such that $U_x\ne F(\phi\frown
\mU)$ for every open covering $\mU$. Since the set $\mV=\{U_x
:x\in
X\}$ is an open cover,   we have
$F(\phi\frown \mV)=U_y$ for some $y\in X$. This
contradicts what we are assuming for $y$ and we are done.
\end{proof}

Let us begin by choosing  a point $x_\emptyset $, according to
Claim \ref{claim 1} for $\phi=\emptyset$ and  then choose
$G_\emptyset \in
\mG$ such that $x_\emptyset \in G_\emptyset $. Next,  for each
$\beta<\frak c$ fix an
open covering $\mU_{\{(0,\beta)\}}$ satisfying
$F((0,\mU_{\{(0,\beta)\}}))=U_\beta(G_\emptyset)$. For each
$\beta_0 <\frak c$
choose a point $x_{\{(0,\beta_0)\}}$, according to Claim
\ref{claim 1} for
$\phi=\{(0, \mU_{\{(0,\beta_0)\}})\}$ and choose
$G_{\{(0,\beta_0)\}}\in \mG$ such that $x_{\{(0,\beta_0)\}}\in
G_{\{(0,\beta_0)\}}$. Then,
for each $\beta<\frak c$ fix an open
covering $\mU_{\{(0,\beta_0), (1,\beta)\}}$ satisfying
$F(\{(0,\mU_{\{(0,\beta_0)\}}),(1,\mU_{\{(0,\beta_0),(1,\beta_1)
\}}\})=U_\beta(G_{\{(0,\beta_0)\}})$. At
step $\omega$,
for each $f\in {}^\omega\frak c$ we have already fixed open
covers
$\mU_{f\restriction {n+1}} $, points $x_{f\restriction n}$ and
sets
$G_{f\restriction n}\in \mG$ with $x_{f\restriction n}\in
G_{f\restriction n}$. Then
let $x_f$  be a point as in Claim 1 for $\phi$ defined by
$\phi(n)=\mU_{f\restriction {n+1}}$ and and let $G_f \in \mG$ be
such that $x_f \in G_f$. Then fix open covers
$\mU_{f\frown \beta }$ satisfying $U_\beta(G_f)=F(\phi\frown
\mU_{f\frown
\beta})$.

  By continuing in this manner, for any $f\in {}^\alpha \frak c$
we  choose a point $x_f$, a set $G_f\in \mG$ satisfying
$x_f\in G_f$  and open covers $\mU_{f\frown \beta}$
satisfying $U_\beta(G_f)=F(\phi\frown \mU_{f\frown \beta})$,
where
$\phi(\gamma)=\mU_{f\restriction {\gamma +1}}$ for any
$\gamma <\alpha $. At the end, we have a  collection
$\mH=\{G_f:f\in
\bigcup\{{}^\alpha \frak c :\alpha <\omega_1\}\}$.

\begin{claim}\label{claim 2}
 $\mH$ is a covering of  $X$.
\end{claim}
\begin{proof}
Assume the contrary and  fix a point
$p\in X\setminus \bigcup \mH$.    According to the
hypotheses,

\noindent (*) for each $f\in {}^\alpha \frak c$ we may fix an
ordinal
$\beta_f<\frak c$ in such a way that $p\notin
{U_{\beta_f}(G_f)}$.

By induction, we may define a function $g\in {}^{\omega_1}\frak
c$
such that
$g(0)=\beta_\emptyset$, $g(1)=\beta_{g\restriction 1}$ and in
general $g(\alpha )=\beta _{g\restriction \alpha }$.
 Now, if  player 1 at the $\alpha $-th inning choose
$\mU_{g\restriction {\alpha +1}}$, then because of (*) player II
looses the game. As this is a contradiction, the Claim is proved.
\end{proof}
 Since we obviously have $|\mH|\le \frak c$, the proof of the
Theorem is done.
\end{proof}
 The simpler version of the above  theorem for the classic
Rothberger game provides an alternative proof of  a recent result
already proved  by the first author and Dias.

\begin{corollary} \cite{AurichiDias} Let $X$ be a space. If
player II has a winning strategy in $\gone(\O, \O)$,
then  the $G_\delta$ modification of $X$ is Lindel\"of.
\end{corollary}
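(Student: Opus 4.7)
The plan is to run through the proof of the Theorem verbatim, with $\omega$ replacing $\omega_1$ and $G_\delta$-sets replacing $G_\c$-sets. Since $G_\delta$-sets form a base for the $G_\delta$-modification, it suffices to show that every cover $\mG$ of $X$ by $G_\delta$-sets admits a countable subcover. For each $G \in \mG$ fix open sets $\{U_n(G): n < \omega\}$ with $G = \bigcap_n U_n(G)$, and let $F$ be a winning strategy for player II in $\gone(\O, \O)$. The analogue of Claim \ref{claim 1}, proved word for word as before, asserts that for each $n < \omega$ and each $\phi \in {}^n\O$ there is a point $x_\phi \in X$ such that every open neighbourhood of $x_\phi$ equals $F(\phi\frown \mU)$ for some open cover $\mU$.

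Next I mimic the recursion of the Theorem, but indexed over $\omega^{<\omega}$ instead of $\c^{<\omega_1}$. For each $f \in {}^n\omega$, I pick $x_f$ as in the claim applied to the sequence $\phi_f$ defined by $\phi_f(k) = \mU_{f\restriction k+1}$, and choose $G_f \in \mG$ with $x_f \in G_f$. Then for each $\beta < \omega$, since $U_\beta(G_f)$ is an open neighbourhood of $x_f$, I pick an open cover $\mU_{f\frown \beta}$ satisfying $F(\phi_f \frown \mU_{f \frown \beta}) = U_\beta(G_f)$. Setting $\mH = \{G_f : f \in \omega^{<\omega}\}$ produces a countable family, so the entire cardinality bookkeeping collapses: $|\mH| \le \aleph_0$.

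To see that $\mH$ covers $X$, I mirror Claim \ref{claim 2}. Suppose towards contradiction that $p \in X \setminus \bigcup \mH$; for each finite sequence $f$ pick $\beta_f < \omega$ with $p \notin U_{\beta_f}(G_f)$, which is possible because $p \notin G_f = \bigcap_n U_n(G_f)$. Now define $g \in {}^\omega\omega$ recursively by $g(n) = \beta_{g \restriction n}$, and consider the play in which player I plays $\mU_{g \restriction n+1}$ at inning $n$: by construction $F$ responds with $U_{g(n)}(G_{g\restriction n})$, which misses $p$. Hence the entire sequence of player II's moves avoids $p$ and she loses the play, contradicting that $F$ was winning. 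The only point of real substance is that the game length being exactly $\omega$ matches both the depth of the recursion (keeping $\mH$ countable) and the length of the diagonal sequence $g$ (making it a legal play); beyond that, the argument is a straightforward downgrading of the Theorem's proof and there is no substantial obstacle.
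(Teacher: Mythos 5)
Your proposal is correct and coincides with what the paper does: the authors give no separate argument for this corollary but simply note that the ``simpler version'' of the Theorem's proof works, i.e.\ rerunning the same recursion and diagonalization with $\w$ in place of $\w_1$ and countable families $\{U_n(G):n<\w\}$ in place of the $\c$-sized ones, exactly as you describe. The only (harmless) slip is writing $\phi\in{}^{n}\w$ where you mean $\phi\in{}^{n}\O$, a typo already present in the paper's Claim 2.2.
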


Much more relevant for us here is the following:
\begin{corollary}[Scheepers-Tall, \cite{ScheepersTall}]
 If $X$ is a space with points $G_\delta $   and player II has
a winning strategy in the game $\gone^{\omega_1}(\O, \O)$, then
$|X|\le 2^{\aleph_0}$.
\end{corollary}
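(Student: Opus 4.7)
The plan is to derive this corollary directly from the Theorem by observing that points being $G_\delta$ makes the $G_\c$-modification $X_\c$ discrete, so that its Lindelöf number coincides with its cardinality.

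First, I would note that if every point of $X$ is a $G_\delta$-set, then \emph{a fortiori} every singleton $\{x\}$ is a $G_\c$-set (write $\{x\}$ as an intersection of countably many open sets, and pad the family arbitrarily to size $\c$). Therefore each singleton $\{x\}$ belongs to the base of $X_\c$ defined in the preliminary discussion, so $\{x\}$ is open in $X_\c$. This means $X_\c$ is a discrete space.

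Next, I would apply the Theorem: since player II has a winning strategy in $\gone^{\omega_1}(\O,\O)$, we conclude $L(X_\c)\le\c$. Now consider in $X_\c$ the open cover $\{\{x\}:x\in X\}$. This cover admits no proper subcover, so any subcover has cardinality $|X|$. Consequently $|X|\le L(X_\c)\le\c=2^{\aleph_0}$, which is the desired conclusion.

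I do not anticipate a substantive obstacle here; the only point requiring any care is the trivial observation that a $G_\delta$-set is a $G_\c$-set, which permits the reduction to the Theorem. All the real work has already been done in proving that $L(X_\c)\le\c$; this corollary is essentially the specialization of that statement to the case in which $X_\c$ is discrete.
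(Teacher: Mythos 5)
Your proposal is correct and matches the paper's intended derivation: the paper states this as an immediate corollary of the theorem $L(X_\c)\le\c$, precisely because points $G_\delta$ makes singletons $G_\c$-sets, so the cover of $X_\c$ by singletons forces $|X|\le L(X_\c)\le\c$.
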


To appreciate the strength of  the above corollary, notice that
the example of Gorelic \cite{Gorelic2008} provides a space
$X$ with points $G_\delta $ in which player I does not have a
winning strategy in $\gone^{\w_1}(\O, \O)$ and
$|X|>2^{\aleph_0}$ (see \cite{ScheepersTall} for a justification
of this fact).

A very natural question arises on  whether  Scheepers-Tall's
inequality can be improved by replacing $\gone$ with
$\gfin$, \emph{i.e.}, the game where player II chooses finitely
many sets per inning, instead of only one. In other words, we
wonder  whether the long  Menger game can suffice in the above
cardinal inequality.

 We will obtain a positive answer under the continuum
hypothesis CH. To achieve this goal we use  another topological
game, somehow in between $\gone$ and $\gfin$.

\begin{lemma}\label{compact base}
  If $X$ is a space with points $G_\delta $, then for
every compact $K \subset X$ there is a family $\mathcal U$ of
open subsets of $X$ such that $K=\bigcap \mathcal U$ and
$|\mathcal U|\le 2^{\aleph_0}$.
\end{lemma}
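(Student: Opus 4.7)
The plan is to exploit the points-$G_\delta$ hypothesis locally at each point of $K$ and then appeal to compactness to bound everything by finite families. For each $x\in K$, use the hypothesis to fix a sequence $\{V_n(x):n<\omega\}$ of open neighbourhoods of $x$ in $X$ with $\bigcap_n V_n(x)=\{x\}$, and take $\mathcal U$ to be the collection of all open sets of the form $V_{n_1}(x_1)\cup\cdots\cup V_{n_k}(x_k)$, where $x_i\in K$, $n_i<\omega$, and this union covers $K$. Every member of $\mathcal U$ contains $K$ by construction, so $K\subseteq\bigcap\mathcal U$ is immediate.

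For the nontrivial inclusion, I would fix $y\in X\setminus K$. Since $\bigcap_n V_n(x)=\{x\}$ and $y\neq x$, for each $x\in K$ there exists $n(x)<\omega$ with $y\notin V_{n(x)}(x)$. The family $\{V_{n(x)}(x):x\in K\}$ is then an open cover of $K$, which by compactness admits a finite subcover, and the union of that subcover is a member of $\mathcal U$ that avoids $y$. Consequently $\bigcap\mathcal U=K$.

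The main obstacle is bounding $|\mathcal U|$. The number of finite unions obtainable from the pool $\bigcup_{x\in K}\{V_n(x):n<\omega\}$ is at most $|K|^{<\omega}\cdot\omega^{<\omega}$, which reduces to $|K|$ for infinite $K$; so everything hinges on proving $|K|\le 2^{\aleph_0}$. I would establish this by first showing $K$ is first countable: after replacing each $V_n(x)$ by $V_1(x)\cap\cdots\cap V_n(x)$ one may assume the sequence is decreasing, and then for any open $W\ni x$ in $K$ the closed sets $\{K\setminus(V_n(x)\cap K):n<\omega\}$ cover the compact set $K\setminus W$, so finitely many do and the monotonicity forces some $V_N(x)\cap K\subseteq W$. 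Arhangel'ski\u{\i}'s theorem, applied to the compact first countable Hausdorff space $K$, then yields $|K|\le 2^{\aleph_0}$, and hence $|\mathcal U|\le 2^{\aleph_0}$.
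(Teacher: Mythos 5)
Your construction of $\mathcal U$ (all finite unions of the sets $V_n(x)$ that contain $K$) and your compactness argument that $\bigcap\mathcal U=K$ are exactly the paper's; the paper merely states these facts without the verification you supply. The divergence, and the problem, is in how you bound $|K|$. The paper works explicitly without any separation axiom (``Here no separation axiom is assumed''), and both this lemma and the theorem it feeds into are stated for an arbitrary space with points $G_\delta$; such a space is $T_1$ but need not be Hausdorff. Your argument gets $|K|\le 2^{\aleph_0}$ by applying Arhangel'ski\u\i's theorem to ``the compact first countable Hausdorff space $K$'', but Hausdorffness of $K$ is nowhere available from the hypotheses, and Arhangel'ski\u\i's inequality is a genuinely $T_2$ result (it is known to fail for $T_1$ spaces in general). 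So as written your proof only covers the case of Hausdorff $X$, which suffices for the Tychonoff corollary later in the paper but not for the lemma and theorem as stated.

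The repair is precisely what the paper does: quote Gryzlov's theorem, which says that a compact $T_1$ space with points $G_\delta$ has cardinality at most $2^{\aleph_0}$; since points $G_\delta$ already implies $T_1$, this gives $|K|\le 2^{\aleph_0}$ with no Hausdorff assumption, and the rest of your count ($|\mathcal U|\le |K|\cdot\aleph_0\le 2^{\aleph_0}$) goes through unchanged. Your first-countability argument for $K$ is correct and would be a nice self-contained route if the ambient space were assumed Hausdorff, but in the paper's setting it cannot replace Gryzlov's theorem, whose whole point is to dispense with $T_2$.
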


\begin{proof}
  First note that each compact $K \subset X$  satisfies $|K|
\leq  2^{\aleph_0}$. This is a consequence of a theorem of
Gryzlov \cite{Gryzlov}.
 For every $x \in K$, let $(V_n^x)_{n \in \w}$ be a family of
open subsets of $X$ satisfying $\bigcap_{n<\w}V_n^x=\{x\}$.

Let $\mB = \{\bigcup_{i = 0}^k
V_{n_i}^{x_i} \supset K: x_0, ..., x_k \in K, n_0, ..., n_k \in
\w\}$. Note that $\bigcap \mB=K$ and $|\mB| \leq  2^{\aleph_0}$.
\end{proof}

\begin{definition}
  We say that an open covering $\mU$ for $X$ is a $\K$-covering
if, for every compact $K \subset X$, there is a $U \in \mU$ such
that $K \subset U$. Let $\K$ be the collection of all
$\K$-coverings.
\end{definition}

\begin{lemma}\label{there is K}
  If $F$ is a winning strategy for player II in the game
$G_1^{\w_1}(\K, \O)$, then for every $(\mU_\alpha)_{\alpha <
\beta}$ sequence of $\K$-coverings for $\beta < \w_1$, there is a
compact $K \subset X$ such that for every open set $U$ such that
$K \subset U$, there is a $\K$-covering $\mU$ such that
$F((\mU_\alpha)_{\alpha < \beta} \smallfrown \mU) = U$.
\end{lemma}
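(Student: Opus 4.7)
My plan is to mimic the proof of Claim \ref{claim 1} almost verbatim, with ``point'' replaced by ``compact set'' and ``open cover'' replaced by ``$\K$-cover''. Specifically, I will argue by contradiction: assume no compact $K$ with the stated property exists, so that for every compact $K\subset X$ one can fix an open set $U_K\supset K$ such that $U_K\ne F((\mU_\alpha)_{\alpha<\beta}\smallfrown\mU)$ for every $\K$-covering $\mU$.

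Next I will form the family $\mV=\{U_K:K\subset X\text{ compact}\}$ and check that $\mV$ is a $\K$-cover. It is an open cover of $X$ because every singleton $\{x\}$ is compact (being finite), so $x\in U_{\{x\}}\in \mV$; and it is a $\K$-cover because for each compact $K$ the particular set $U_K$ lies in $\mV$ and contains $K$. Feeding this $\mV$ to the strategy as the $\beta$-th move, we obtain $F((\mU_\alpha)_{\alpha<\beta}\smallfrown\mV)=U_{K'}$ for some compact $K'\subset X$, which directly contradicts the defining property of $U_{K'}$.

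I expect no real obstacle: the argument is a straightforward transcription of Claim \ref{claim 1}, and the only new wrinkle is the verification that the candidate family $\{U_K:K\text{ compact}\}$ is in fact a $\K$-cover, which is immediate since every $U_K$ witnesses the $\K$-property for its own indexing compact set $K$. Note that no separation axiom is invoked, in keeping with the standing convention of the paper.
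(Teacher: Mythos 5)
Your argument is correct and is essentially the same as the paper's proof: contradiction, a choice of $U_K$ for each compact $K$, the observation that $\{U_K : K \text{ compact}\}$ is a $\K$-covering, and then feeding it to $F$ to contradict the choice of $U_{K'}$. The extra verification that the family is a $\K$-cover (via singletons and the sets $U_K$ themselves) is a harmless elaboration of what the paper leaves implicit.
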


\begin{proof}
  Suppose not. Let $(\mU_\alpha)_{\alpha < \beta}$ such that for
every compact $K \subset X$, there is an open $U_K$ such that $K
\subset U_K$ and for every $\K$-covering $\mU$,
$F((\mU_\alpha)_{\alpha < \beta} \smallfrown \mU) \neq U_K$. Let
$\mU = \{U_K: K \subset X$ is compact$\}$. Note that $\mU$ is a
$\K$-covering. Then there is a compact $K$ such that
$F((\mU_\alpha)_{\alpha < \beta} \smallfrown \mU) = U_K$, which
is a contradiction.
\end{proof}

\begin{theorem}
  Let $X$ be a space with points $G_\delta $. If player II has a
winning
strategy  in the game $G_1^{\w_1}(\K, \O)$ over $X$, then $|X|
\leq  2^{\aleph_0}$.
\end{theorem}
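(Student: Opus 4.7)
The plan is to mimic the structure of the proof of Theorem~2.1, substituting $\K$-coverings for open coverings, compact sets for single points, and Lemma~\ref{there is K} in the role of Claim~\ref{claim 1}. The strategy $F$ now wins in $\gone^{\w_1}(\K,\O)$, and Lemma~\ref{compact base} tells us that every compact $K\subset X$ can be written as an intersection of at most $\c$ open sets, so fix for each compact $K$ a family $\{U_\beta(K):\beta<\c\}$ of open sets with $K=\bigcap_{\beta<\c}U_\beta(K)$.

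I would build a tree of height $\omega_1$ indexed by $f\in\bigcup_{\alpha<\w_1}{}^\alpha \c$. Starting from $f=\emptyset$, use Lemma~\ref{there is K} with the empty sequence to find a compact set $K_\emptyset$ such that every open superset of $K_\emptyset$ is hit by $F$ after the single-inning play of an appropriate $\K$-covering; for each $\beta<\c$, choose a $\K$-covering $\mU_{(\beta)}$ with $F(\mU_{(\beta)})=U_\beta(K_\emptyset)$. Recursively, suppose for every $\alpha<\w_1$ and every $f\in{}^\alpha\c$ we have fixed compact sets $K_{f\restriction\gamma}$ and $\K$-coverings $\mU_{f\restriction(\gamma+1)}$ for $\gamma<\alpha$. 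Let $\phi(\gamma)=\mU_{f\restriction(\gamma+1)}$; Lemma~\ref{there is K} applied to the sequence $(\phi(\gamma))_{\gamma<\alpha}$ yields a compact $K_f$ such that for every open $U\supset K_f$ there is a $\K$-covering $\mU$ with $F(\phi\frown\mU)=U$. Use this to pick, for each $\beta<\c$, a $\K$-covering $\mU_{f\frown\beta}$ with $F(\phi\frown\mU_{f\frown\beta})=U_\beta(K_f)$.

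At the end I get the family $\mH=\{K_f:f\in\bigcup_{\alpha<\w_1}{}^\alpha\c\}$ of compact sets, of cardinality at most $\c$. The key claim is that $\mH$ covers $X$. Suppose $p\in X\setminus\bigcup\mH$. For each $f$, since $p\notin K_f$ and $K_f=\bigcap_{\beta<\c}U_\beta(K_f)$, choose $\beta_f<\c$ with $p\notin U_{\beta_f}(K_f)$. Define $g\in{}^{\w_1}\c$ by recursion: $g(\alpha)=\beta_{g\restriction\alpha}$. Consider the play in which player I at inning $\alpha$ plays $\mU_{g\restriction(\alpha+1)}$; then by construction player II, using $F$, answers $U_{g(\alpha)}(K_{g\restriction\alpha})=U_{\beta_{g\restriction\alpha}}(K_{g\restriction\alpha})$, which never contains $p$. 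Hence player II's responses fail to cover $p$, contradicting that $F$ is a winning strategy.

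Finally, each $K\in\mH$ is compact in a space with points $G_\delta$, hence by Gryzlov's theorem (cited in the proof of Lemma~\ref{compact base}) has cardinality at most $\c$; combining with $|\mH|\le\c$, we conclude $|X|=|\bigcup\mH|\le\c\cdot\c=\c$. The main obstacle is really just the bookkeeping of the transfinite recursion of length $\omega_1$, ensuring that at each limit stage the sequence of previously chosen $\K$-coverings assembles into a legitimate partial play to which Lemma~\ref{there is K} can be applied; once that is set up, the diagonalization producing $g$ and the contradiction with $F$ being winning proceed exactly as in Theorem~2.1.
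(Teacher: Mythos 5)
Your proposal is correct and follows essentially the same route as the paper's own proof: a tree of compact sets $K_f$ indexed by $f\in{}^{<\w_1}\c$ built via Lemma~\ref{there is K} and Lemma~\ref{compact base}, a diagonal function $g\in{}^{\w_1}\c$ contradicting the winning strategy if some point were missed, and Gryzlov's theorem to bound the cardinality of the union. No gaps worth noting.
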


\begin{proof} According to Lemma \ref{compact base},
  for every compact $K \subset X$,  let $(U_\xi^K)_{x < \c}$ be
a  family of open subsets of $X$ such that
$K=\bigcap_{\xi<\c}U_\xi^K$. Let $F$ be a winning strategy for
player II. Let
$K_\emptyset$ be given by Lemma \ref{there is K} such that for
every $\xi < \c$, there is a $\K$-covering $\mU_\xi^\emptyset$
for $X$ such that $F(\mU_\xi^\emptyset) = U_\xi^{K_\emptyset}$.
Let $f: \alpha \Em \w_1$ for some $\alpha < \w_1$. Suppose to
have already
defined $\mU_{f(\beta)}^{f \upharpoonright \beta}$ and $K_{f
\upharpoonright \beta}$ for every $\beta < \alpha$ such that
$F((\mU_{f(\beta)}^{f \upharpoonright \beta})_{\beta < \gamma}) =
U_{f(\gamma)}^{K_{f \upharpoonright \gamma}}$ for every $\gamma <
\alpha$. Let $K_f$ and $(\mU_\xi^f)_{\xi < \c}$ be the open
coverings given by
Lemma \ref{there is K} in such a way that, for every $\xi$,
$F((\mU_{f(\beta)}^{f \upharpoonright \beta})_{\beta < \alpha}
\smallfrown \mU_\xi^f)  = U_{\xi}^{K_f}$.Note that $|\{K_f: f \in
\c^{<w_1}\}| \leq \c$. Therefore, by Gryzlov's Theorem,  $D =
\bigcup_{f \in \c^{<\w_1}}
K_f$  satisfies $|D| \leq \c$.  Thus, to finish the proof  it
is enough to show that $D=X$.

  Suppose not. Then there is a point  $p$ such that $p\notin  D$.
Therefore, there is an $f: \w_1 \Em \c$ such
that $F((\mU_{f(\beta)}^{f \upharpoonright \beta})_{\beta <
\gamma}) = U_{f(\gamma)}^{K_{f \upharpoonright \gamma}}\not\ni p$
for
every $\gamma < \w_1$, since   $p\notin K_{f \upharpoonright
\gamma} $.
But then, playing in this way, player II would loose,
which is a contradiction to the fact that $F$ is a winning
strategy.
\end{proof}
Now, to obtain our main result we need to make use of one more
game.

The compact-open game of length $\kappa $ over a space $X$ is
played as follows:  at the $\alpha $-inning player I chooses a
compact set $K_\alpha $ and player II responds by taking an open
set $U_\alpha \supset K_\alpha$ . The rule of the game is that
player I wins if, and only if, the collection $\{U_\alpha
:\alpha <\kappa \}$ covers $X$.

The following can be obtained by a simple modification of
Galvin's result about the duality of the Rothberger game and the
point-open game (\cite{GalvinGames}):

\begin{lemma} \label{dual}
Let $X$ be a space. Then, for any infinite cardinal
$\kappa $, the games $\gone^\kappa (\K, \O)$ and the
compact-open game of length $\kappa $ are dual. In particular,
player II
has a winning strategy in $\gone^\kappa (\K, \O)$ if and only
if player I has a winning strategy in the compact-open game of
length $\kappa $. \end{lemma}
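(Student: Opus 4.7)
The plan is to adapt Galvin's classical duality between the Rothberger game and the point-open game, systematically replacing single points by compact subsets of $X$. A preliminary observation I would record at the start is that Lemma \ref{there is K} is stated for $\omega_1$ only for convenience: its proof uses nothing specific to that cardinal and yields the analogous conclusion for any partial sequence $(\mathcal{U}_\alpha)_{\alpha<\beta}$ of $\K$-coverings with $\beta<\kappa$, whenever $F$ is a winning strategy for player II in $\gone^\kappa(\K,\O)$. I will refer to this as the ``$\kappa$-version'' of the lemma.

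For the substantive direction, suppose $\sigma$ is a winning strategy for player II in $\gone^\kappa(\K,\O)$; I would build a winning strategy $\tau$ for player I in the compact-open game by a transfinite recursion of length $\kappa$. At stage $\alpha$, with a compact-open history $(K_\gamma,U_\gamma)_{\gamma<\alpha}$ in hand together with auxiliary $\K$-coverings $\mathcal{V}_\gamma$ ($\gamma<\alpha$) previously chosen so that $\sigma((\mathcal{V}_\delta)_{\delta\le\gamma})=U_\gamma$, apply the $\kappa$-version of Lemma \ref{there is K} to $(\mathcal{V}_\gamma)_{\gamma<\alpha}$ to pick a compact $K_\alpha$, and let $\tau$ play $K_\alpha$; once player II answers with some $U_\alpha\supset K_\alpha$, fix a $\K$-cover $\mathcal{V}_\alpha$ with $\sigma((\mathcal{V}_\delta)_{\delta\le\alpha})=U_\alpha$. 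The resulting data form simultaneously a run of the compact-open game in which player I plays $\tau$ and a legal run of $\gone^\kappa(\K,\O)$ in which player II obeys the winning strategy $\sigma$; consequently $\{U_\alpha:\alpha<\kappa\}$ is an open cover of $X$, and $\tau$ wins.

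The reverse implication is the easier one. Given $\tau$ winning for player I in the compact-open game, I would translate any play of $\gone^\kappa(\K,\O)$ into a parallel compact-open play by recursively setting $K_\alpha=\tau((K_\beta,U_\beta)_{\beta<\alpha})$ and, using that $\mathcal{U}_\alpha$ is a $\K$-cover, choosing some $U_\alpha\in\mathcal{U}_\alpha$ containing $K_\alpha$; defining $\sigma((\mathcal{U}_\beta)_{\beta\le\alpha}):=U_\alpha$ then produces a strategy for player II that wins because $\tau$ wins the parallel compact-open play. The only point requiring care in the whole argument is the interlocked double recursion in the forward direction — one thread producing the compact sets for $\tau$, the other the $\K$-coverings that force $\sigma$ to replay II's compact-open responses — but the $\kappa$-version of Lemma \ref{there is K} is tailored to deliver exactly the compact set needed at each stage, so the recursion goes through without any genuine obstruction.
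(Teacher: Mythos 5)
Your argument is correct and is essentially the ``simple modification of Galvin's result'' that the paper invokes without giving details: the reverse implication is the straightforward translation of a compact-open strategy into a selection strategy, and the forward implication is exactly the Galvin-style transfinite recursion driven by the $\kappa$-version of Lemma~\ref{there is K}, whose proof indeed uses nothing particular to $\omega_1$ (and the bookkeeping of the auxiliary $\K$-coverings is legitimate, since they can be reconstructed from the history of player II's moves). The only caveat is that you prove the ``in particular'' equivalence, which is all the paper ever uses; duality in Galvin's full sense would also pair player I in $\gone^\kappa(\K,\O)$ with player II in the compact-open game, a direction your proposal does not address.
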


\begin{theorem}
  Let $X$ be a Tychonoff space. If player II has a winning
strategy
 in the game $\gfin^\kappa(\O, \O)$ for some infinite regular
cardinal
$\kappa$,
then player I has a winning strategy  in the
compact-open game of length $2^{<\kappa }$.
\end{theorem}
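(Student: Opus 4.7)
The plan is to adapt the argument of the first theorem of this section to the Menger setting. First I would establish an analogue of Claim \ref{claim 1}: for the winning strategy $\sigma$ of II in $\gfin^\kappa(\O,\O)$ and any sequence $\phi$ of open covers of length $\alpha<\kappa$, there exists a point $x_\phi \in X$ such that for every open neighborhood $U$ of $x_\phi$ there is an open cover $\mU$ with $U \in \sigma(\phi \frown \mU)$. The proof is unchanged from Claim \ref{claim 1}: if no such $x_\phi$ existed, for each $x$ pick $U_x \ni x$ failing the condition; the family $\{U_x : x\in X\}$ is an open cover, yet the nonempty finite set $\sigma(\phi \frown \{U_x : x \in X\})$ must include some $U_{x_0}$, contradicting its defining property.

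Next I would use this Claim to construct player I's strategy in the compact-open game (equivalently, by Lemma \ref{dual}, player II's strategy in $\gone^{2^{<\kappa}}(\K,\O)$). Index the innings by the tree $\omega^{<\kappa}$, which has exactly $2^{<\kappa}$ nodes for regular infinite $\kappa$. For each node $s$, recursively build a Menger history $\phi_s$ (using the Claim, with II's previous compact-open response to select the next Menger cover $\mU$ so that the response lies in $\sigma(\phi_{s \upharpoonright (|s|-1)} \frown \mU)$) and play the singleton $K_s = \{x_{\phi_s}\}$, where $x_{\phi_s}$ is the Claim point. The $\omega$-wide branching at each node is exploited to diversify the covers allowed by the Claim, so that as we descend the tree we can track every element of every finite Menger response.

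To verify winning, assume for contradiction $p \notin \bigcup_s U_s$ and inductively construct a branch $b \in \omega^\kappa$ along which no set in any $\sigma$-response $F_\alpha = \sigma((\mU_{b \upharpoonright (\beta+1)})_{\beta \le \alpha})$ contains $p$; the winning property of $\sigma$ then forces $\bigcup_\alpha \bigcup F_\alpha = X$ and in particular covers $p$, a contradiction. The main obstacle is precisely the construction of this branch $b$: at each level $\alpha$ the finite set $F_\alpha$ may contain elements other than $U_{b \upharpoonright \alpha}$ that meet $p$, and we must invoke the freedom in the Claim (which provides many witnessing covers, not just one) to diversify $\sigma$-responses across the $\omega$-many children so that a coherent $p$-avoiding branch $b$ can be threaded through the entire tree. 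The Tychonoff hypothesis is used to work with the compact sets $K_s$, and the regularity of $\kappa$ guarantees $\sum_{\alpha<\kappa}\omega^\alpha = 2^{<\kappa}$ so that the total node-count matches the announced game length.
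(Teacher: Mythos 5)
Your Claim~\ref{claim 1}--analogue is indeed true for $\gfin$ (the same argument works, since $\sigma(\phi\frown\{U_x:x\in X\})$ is a finite subset of the cover played), but it is the wrong tool, and the step you yourself flag as ``the main obstacle'' is not a technical nuisance that diversification over the $\w$-many children can overcome: it is where the argument genuinely breaks. The point $x_\phi$ controls only \emph{one} member of II's finite response, and no choice among the witnessing covers controls the rest. Concretely, take $X=[0,1]$ and let $\sigma$ be the strategy ``answer every cover with a finite subcover of the whole space''; this is a winning strategy in $\gfin^\kappa(\O,\O)$ for every $\kappa$, and \emph{every} response covers \emph{every} point, so no $p$-avoiding branch exists through any node of your tree, however the responses are diversified. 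Worse, since you have player I play the singletons $K_s=\{x_{\phi_s}\}$, your scheme, if it worked, would show that player I wins the \emph{point}-open game of length $2^{<\kappa}$, equivalently (Galvin duality) that II wins $\gone^{2^{<\kappa}}(\O,\O)$; for $\kappa=\w$ and $X=[0,1]$ this is false ($2^{<\w}=\w$, II wins the Menger game by compactness, but $[0,1]$ is not Rothberger, so II has no winning strategy in $\gone(\O,\O)$ and I has none in the point-open game of length $\w$). So the approach proves too much and cannot be repaired; the compact sets in the conclusion must be genuinely large. Note also that with singletons your appeal to the Tychonoff hypothesis is vacuous.

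The paper's proof replaces the ``Claim point'' by a different object: at a history coded by $s$, player I plays
$K = \bigcap_{C\in\mC}\overline{\bigcup\sigma\bigl((C_{s\upharpoonright\xi+1})_{\xi\in \mathrm{dom}(s)}\smallfrown C\bigr)}^{\beta X}$,
the set of points lying in the closure of II's response no matter which cover is played next. This set is compact and contained in $X$ precisely because $X$ is Tychonoff (this is where $\beta X$ and the hypothesis really enter). When II answers with an open $V\supset K$, compactness of $\beta X$ extracts finitely many covers $C_0,\dots,C_{n}$ with $\bigcap_{i\le n}\overline{\bigcup\sigma(\cdots\smallfrown C_i)}^{\beta X}\subset V^*$, so any point missed by $V$ is missed by the \emph{entire} finite response $\sigma(\cdots\smallfrown C_i)$ for some $i\le n$. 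It is these finitely many candidate next covers --- not many witnesses for one neighbourhood of one point --- that the $\w$-branching and the bookkeeping function $f$ with $|f^{-1}(s)|=2^{<\kappa}$ schedule into future innings; regularity of $\kappa$ is then used via $cf(2^{<\kappa})\ge\kappa$ at limit stages to find an unused inning in $f^{-1}(t)$ above all previously used ones, not merely to count the nodes of $\w^{<\kappa}$. Only with this device does the final branch construction (choosing at each step an $i$ whose full response avoids the uncovered point $x$) go through and contradict the fact that $\sigma$ is winning.
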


\begin{proof}
  Let $\sigma$ be a winning strategy for player II  in
$\gfin^\kappa(\O, \O)$. Let $f:2^{<\kappa }\to {}^{<\kappa}\w$
be a function such that $f(0)=\emptyset $ and  for each $s \in
{}^{<\kappa }\w\setminus \{\emptyset\}$
\begin{equation}\label{eq enumeration}
|f^{-1}(s)|=2^{< \kappa}
\end{equation}

We are going to define a strategy $F$ for player I in the
compact-open game of length
$2^{<\kappa }$ on $X$.
Let $\mC$ be the collection of all open coverings  of $X$.  For
any open subset
$A$ of $X$, fix $A^*$ an open subset of $\beta
X$ such that $A = A^* \cap X$. Define
$$K_0 = \bigcap_{C \in \mC} \overline{\bigcup
\sigma(C)}^{\beta X}.$$
Note that $K_0$   is compact and  $K_0 \subset X$.
We put
$F(0)=K_0 $.  Let
$V_0$ be the answer of player II in  the compact-open
game.  By compactness, there are
$C_0, ..., C_{n_\emptyset }  \in \mC$ such that
$$\bigcap_{i \le  n_\emptyset }  \overline{\bigcup
\sigma(C_i)}^{\beta X} \subset V_0^*.$$
For any $s\in {}^1\w$ let $\alpha _s=\min f^{-1}(s)$ and  put
$C_{f(\alpha _s)}=C_i $ if   $i\le n_\emptyset $ and
$C_{f(\alpha _s)}=\{X\}$ otherwise.

In general,
at the $\beta$ inning of the compact-open game, let $s =
f(\beta)$.

Case 1.  If we have already defined $C_{s\upharpoonright
{\xi+1}}$ for
each $\xi\in dom(s)$ and there are ordinals $\alpha _\xi<\beta$
such that $f(\alpha _\xi)=s\upharpoonright \xi$,  then we put
 $$K_\beta  = \bigcap_{C \in \mC} \overline{\bigcup \sigma((C_{s
\upharpoonright \xi+1})_{\xi \in dom(s)} \smallfrown C)}^{\beta
X}.$$
 Let $V_\beta$ be the answer  of
player II in the compact-open game after player $I$ plays
$F(\beta)=K_\beta$. By
compactness, let $C_{ 0}, ..., C_{ n_s}
\in \mC$ be such that
$$\bigcap_{i\le n_s}   \overline{\bigcup \sigma((C_{s
\upharpoonright \xi+1})_{\xi \in dom(s)} \smallfrown
C_{ i})}^{\beta
X} \subset V_\beta^*.$$

Since at each move we define at most $\omega$ new open coverings,
the set $S$ of all $\alpha <2^{<\kappa }$ for which $C_{f(\alpha
)}$ was already defined has cardinality not exceeding
$|\beta|\omega < 2^{<\kappa }$.

 Therefore, by (\ref{eq enumeration})  for each $i<\omega$ we may
pick $\alpha _i\in  (f^{- 1}(s\smallfrown i)\setminus S)$.   Then
put $C_{f(\alpha_i)}=C_i$ if $i\le n_s$ and $C_{f(\alpha _i)}
=\{X\}$ if $i>n_s$.

  If Case 1  does not take place, then  we simply put
$F(\beta)=K_\beta=\emptyset $ (Case 2).

Let us prove that,  playing according to $F$,  player I always
wins
the
compact-open game. Suppose not and let $x \in X$ be such that $x
\notin \bigcup\{V_\alpha:  \alpha < 2^{< \kappa}\}$,   for a
certain set $\{V_\alpha :\alpha <\kappa \}$ of legitimate
moves of player II. Since
$\bigcap_{i\le n_\emptyset } \overline{\bigcup
\sigma(C_i)}^{\beta X} \subset
V_0^*$, there is an $n_0\le n_\emptyset $ such that $x
\notin \bigcup
\sigma(C_{n_0})$. Then let $C_{\{(0,n_0)\}}=C_{n_0}$.
   Proceeding by induction, assume that for some $\alpha
<\kappa $ we have defined a  function $t\in  {}^\alpha\w$  and
open coverings $C_{t\upharpoonright  {\nu+1}}$, for each $\nu
<\alpha$,  in such a way that $x\notin
\bigcup\sigma((C_{t\upharpoonright \nu+1})_{\nu<\gamma})$ for
each
$\gamma <\alpha $. Moreover, let $\alpha  _\nu<2^{<\kappa }$ be
such that $f(\alpha _\nu)=t\upharpoonright  \nu$ for each
$\nu<\alpha $.
 Since $cf(2^{<\kappa })\ge cf (\kappa )=\kappa
$ and \ref{eq enumeration} holds, we may pick  $\beta\in  f^{-
1}(t)$ such that $\alpha _\nu<\beta$ for each $\nu<\alpha $.
 According to our
construction, Case 1   holds and so there is an integer $j\le
n_t$ such that $x\notin
\bigcup\sigma((C_{t\upharpoonright \nu+1})_{\nu<\alpha
}\smallfrown C_{t\smallfrown j})$. This extents $t$ to a function
with domain $\alpha +1$ and  the induction is complete.
 At the end, we   obtain a function $t\in {}^\kappa \w$ and open
coverings $c_{t\upharpoonright \nu+1}$ for each $\nu<\kappa $,
in such a way that the play
$$C_{\{(0,n_0)\}}, \sigma(C_{\{(0,n_0)\}}), \ldots,
 C_{t\upharpoonright \nu+1},
\sigma((C_{t\upharpoonright \xi+1})_{\xi\le \nu}), \ldots  $$
is lost by player II, in evident contradiction   with the
fact that $\sigma$
is a winning strategy.
\end{proof}

We wish to thank R. Dias and the careful referee for the great
help in the previous proof.

Now, by the above theorem and Lemma \ref{dual},  we easily get
the result mentioned in the abstract.

\begin{corollary} [CH] \label{ch}
Let $X$ be a Tychonoff space with points $G_\delta$.
If player II has a winning strategy in the game
$\gfin^{\w_1}(\O,\O)$, then $|X|\le  2^{\aleph_0}$.
\end{corollary}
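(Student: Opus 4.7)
The plan is to assemble the three preceding results in sequence, using CH only to tame the cardinal $2^{<\w_1}$. Concretely, I would proceed as follows.

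First, I would apply the theorem immediately above with $\kappa=\w_1$. From the hypothesis that player II has a winning strategy in $\gfin^{\w_1}(\O,\O)$, that theorem yields a winning strategy for player I in the compact-open game of length $2^{<\w_1}$.

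Second, I would reduce the length $2^{<\w_1}$ to $\w_1$ using CH. Under CH, $2^{\aleph_0}=\aleph_1$, so for every $\alpha<\w_1$ we have $|\alpha|\le\aleph_0$ and hence $2^{|\alpha|}\le 2^{\aleph_0}=\aleph_1$; taking the supremum gives $2^{<\w_1}=\aleph_1=\w_1$. Therefore player I has a winning strategy in the compact-open game of length $\w_1$.

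Third, I would invoke Lemma \ref{dual}, the duality between $\gone^\kappa(\K,\O)$ and the compact-open game of length $\kappa$, specialized to $\kappa=\w_1$. This converts the winning strategy for player I in the compact-open game of length $\w_1$ into a winning strategy for player II in $\gone^{\w_1}(\K,\O)$.

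Finally, I would apply the earlier theorem stating that, for a space with points $G_\delta$, a winning strategy for player II in $\gone^{\w_1}(\K,\O)$ forces $|X|\le 2^{\aleph_0}$. Since $X$ is Tychonoff (hence in particular points $G_\delta$ is retained) and the hypotheses of that theorem are met, we conclude $|X|\le 2^{\aleph_0}$. There is no real obstacle here beyond keeping track of the cardinal arithmetic in the second step; the only place CH intervenes is to collapse $2^{<\w_1}$ down to $\w_1$ so that Lemma \ref{dual} can be applied at the length we actually want.
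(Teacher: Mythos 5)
Your proposal is correct and follows exactly the route the paper intends: the preceding theorem with $\kappa=\w_1$, the CH computation $2^{<\w_1}=2^{\aleph_0}=\w_1$, Lemma \ref{dual} to pass to player II in $\gone^{\w_1}(\K,\O)$, and the earlier theorem for spaces with points $G_\delta$. The paper leaves this chain implicit ("by the above theorem and Lemma \ref{dual}"), and your write-up simply makes the same steps explicit.
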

As a further corollary, we get a more direct proof of the
following
result.

\begin{corollary}[Telgarsky, \cite{Telgarsky1984}]
  Let $X$ be a  Tychonoff space. Then player II has a winning
strategy  in the game $\gfin(\O, \O)$ if, and only if, player
II has a winning strategy  in the game $\gone(\K, \O)$.
\end{corollary}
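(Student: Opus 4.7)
The plan is to prove the two directions separately, using the earlier Theorem on $\gfin^\kappa$ together with Lemma \ref{dual} for the nontrivial direction, and a simple combinatorial observation for the other direction.

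For the implication that a winning strategy for player II in $\gfin(\O,\O)$ yields a winning strategy for II in $\gone(\K,\O)$, I would apply the preceding Theorem with $\kappa=\omega$. Since $2^{<\omega}=\omega$, a winning strategy for II in $\gfin^\omega(\O,\O)$ produces a winning strategy for I in the compact-open game of length $\omega$. Lemma \ref{dual}, applied with $\kappa=\omega$, then translates this into a winning strategy for II in $\gone^\omega(\K,\O)=\gone(\K,\O)$. This direction is essentially immediate; the work has already been done in the previous two results, and this is the reason the Tychonoff hypothesis is needed.

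For the converse, I would give a direct combinatorial argument. Let $\tau$ be a winning strategy for II in $\gone(\K,\O)$. Given any open covering $\mU$ of $X$, form $\mU^\sharp=\{U_1\cup\cdots\cup U_k:k\in\w,\ U_1,\ldots,U_k\in\mU\}$. Because $\mU$ is an open cover of $X$ and every compact subset of $X$ admits a finite subcover from $\mU$, the family $\mU^\sharp$ is a $\K$-covering of $X$. I would then define a strategy $\sigma$ for II in $\gfin(\O,\O)$ as follows: when at inning $n$ player I offers the open covering $\mU_n$, convert the history $\mU_0,\ldots,\mU_n$ to $\mU_0^\sharp,\ldots,\mU_n^\sharp$, let $W_n=U_1\cup\cdots\cup U_{k_n}\in\mU_n^\sharp$ be $\tau$'s response, and let $\sigma$ play the finite subfamily $\{U_1,\ldots,U_{k_n}\}\subset\mU_n$. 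Since $\{W_n:n\in\w\}$ covers $X$ by the winning property of $\tau$, the union of II's moves under $\sigma$ also covers $X$, so $\sigma$ is winning.

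I do not anticipate a real obstacle. The forward direction is a bookkeeping application of previously established results, and the converse is a standard passage from open covers to $\K$-covers via finite unions. The only minor point to watch is that in the forward direction the space must be Tychonoff so that the preceding Theorem applies, which is exactly the hypothesis of the corollary as stated.
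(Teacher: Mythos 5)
Your proposal is correct and follows the same route the paper intends: the forward direction is exactly the paper's argument, applying the preceding Theorem with $\kappa=\omega$ (so $2^{<\omega}=\omega$) and then Lemma \ref{dual}, with the Tychonoff hypothesis used only there. The converse, which the paper leaves implicit as the standard easy direction, is handled correctly by your finite-unions construction turning an open cover into a $\K$-cover.
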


Also, if we assume the continuum hypothesis, then we can go up to
$\w_1$:

\begin{corollary}[CH] Let $X$ be a Tychonoff space. Then player
II has a winning
strategy  in the game $\gfin^{\w_1}(\O, \O)$ if, and only if,
player II has a winning strategy  in the game $\gone^{\w_1}(\K,
\O)$.
\end{corollary}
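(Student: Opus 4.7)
The plan is to prove the equivalence in two parts, with the CH hypothesis entering only in the forward direction.

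For the implication ``player II wins $\gfin^{\w_1}(\O,\O)$ $\Rightarrow$ player II wins $\gone^{\w_1}(\K,\O)$'', I would simply feed the hypothesis into the preceding theorem (applied with $\kappa=\w_1$) to obtain a winning strategy for player I in the compact-open game of length $2^{<\w_1}$. Under CH one has $2^\alpha=\aleph_1$ for every infinite $\alpha<\w_1$, and hence $2^{<\w_1}=\w_1$. Applying Lemma \ref{dual} at $\kappa=\w_1$ then translates player I's winning strategy in the compact-open game of length $\w_1$ into the desired winning strategy for player II in $\gone^{\w_1}(\K,\O)$.

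For the converse I would drop CH entirely and work directly from a winning strategy $\sigma$ for player II in $\gone^{\w_1}(\K,\O)$. The key observation is that for any open cover $\mathcal{U}$ of $X$, the family $\mathcal{U}^\star=\{\bigcup\mathcal{F}:\mathcal{F}\in[\mathcal{U}]^{<\w}\}$ of finite unions is automatically a $\K$-cover, since every compact subset of $X$ is covered by finitely many members of $\mathcal{U}$ and hence is contained in a single member of $\mathcal{U}^\star$. I would then build a strategy $\tau$ for player II in $\gfin^{\w_1}(\O,\O)$ by transfinite recursion: when player I plays an open cover $\mathcal{U}_\alpha$, simulate a play of $\gone^{\w_1}(\K,\O)$ in which player I has played $\mathcal{U}_\alpha^\star$ and let $\sigma$ pick a single element, necessarily of the form $\bigcup\mathcal{F}_\alpha$ with $\mathcal{F}_\alpha\in[\mathcal{U}_\alpha]^{<\w}$; the response of $\tau$ is then this finite set $\mathcal{F}_\alpha$. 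Because $\sigma$ is winning, the unions $\bigcup\mathcal{F}_\alpha$ cover $X$ at the end of the play, and therefore so do the members of $\bigcup_{\alpha<\w_1}\mathcal{F}_\alpha$.

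No step here is genuinely hard: essentially all of the deep content is concentrated in the preceding theorem, in Lemma \ref{dual}, and in the earlier appeals to Gryzlov's theorem. The only place where CH is genuinely required is the arithmetic equality $2^{<\w_1}=\w_1$ in the forward direction; the converse is a ZFC result. The finite-union bookkeeping in the $\Leftarrow$ direction is the spot where I would be most careful, verifying that $\mathcal{U}^\star$ really is a $\K$-cover and that the recursive definition of $\tau$ is coherent, but neither check presents a real obstacle.
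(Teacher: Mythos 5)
Your proof is correct and follows exactly the route the paper intends (the paper leaves it implicit): the forward direction is the preceding theorem with $\kappa=\w_1$ plus Lemma \ref{dual}, using CH only to get $2^{<\w_1}=\w_1$, and the converse is the standard ZFC translation via finite unions forming a $\K$-cover.
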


Further  game theoretic cardinality bounds can be found in
\cite{bellaspadaro}. In  particular, Theorem 2.2 of
\cite{bellaspadaro} provides a version of Scheepers-Tall's
inequality for the game $\gone^{\omega_1}(\O,\D)$  in the class
of first countable regular spaces.  Although not all  proofs of
the  results presented here before Corollary \ref {ch} have a
direct analogous by passing from ``$(\O, \O)$'' to ``$(\O,
\D)$'', we
believe the following question  could have a positive answer:
\begin{question} Let $X$ be a first countable regular space and
assume that player II has a winning strategy in the game
$\gfin^{\omega_1}(\O, \D)$. Is it true that $|X|\le
2^{\aleph_0}$? \end{question}

\section{Games and open neighborhood assignments}

We end this paper showing some results that split the local parts
from the global parts in some variations of the games presented
above. For the global parts we use the concept of open
neighborhoods assignments:

\begin{definition}
  Let $X$ be a topological space. We say that a family $(V_x)_{x
\in X}$ is an {\bf open neighborhood assignment} for $X$ if each
$V_x$ is an open set such that $x \in V_x$.
\end{definition}

The key idea for the next game is that we will not ask for a
dense set at the end, but for something that looks like a dense,
from the point of view of a given open neighborhood assignment:

\begin{definition}
  Let $X$ be  a space and let $(V_x)_{x \in X}$ be an
open neighborhood assignment. Define the game $G((V_x)_{x \in
X})$ as follows. For every inning $\xi < \w_1$, player I
chooses an open covering $\mC_\xi$ for $X$. Then, player II
chooses $C_\xi \in \mC_\xi$. We say that player II wins the game
if for every $x \in X$ there is a $\xi < \w_1$ such that $V_x
\cap C_\xi \neq \emptyset$.
\end{definition}

\begin{proposition}
If $X$ is a first countable space such that player II has a
winning strategy  in the game $G((V_x)_{x \in X})$ for every open
neighborhood assignment $(V_x)_{x \in X}$, then player II has a
winning strategy  in the game $\gone^{\w_1}(\O, \D)$.
\end{proposition}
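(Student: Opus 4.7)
The plan is to combine countably many winning strategies (one for each “level” of a local base) into a single winning strategy in $\gone^{\omega_1}(\O,\D)$, using the fact that we have $\omega_1$ innings available and $\omega_1 = \omega \cdot \omega_1$.

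Concretely, first use first countability to fix, for each $x \in X$, a decreasing countable local base $\{V_x^n : n < \omega\}$ of open neighborhoods. For each fixed $n$, the family $(V_x^n)_{x \in X}$ is an open neighborhood assignment, so by hypothesis player II has a winning strategy $F_n$ in the game $G((V_x^n)_{x \in X})$. Next, fix a partition $\omega_1 = \bigsqcup_{n<\omega} I_n$ with each $I_n$ of order type $\omega_1$ (for example, decompose each $\alpha<\omega_1$ uniquely as $\lambda+n$ with $\lambda$ a limit or $0$, and put it into $I_n$).

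I then define a strategy $F$ for player II in $\gone^{\omega_1}(\O,\D)$ as follows. Suppose player I has played covers $(\mC_\xi)_{\xi\le\alpha}$. Let $n$ be the unique index with $\alpha \in I_n$, and let $(\mC'_\beta)_{\beta\le\alpha_n}$ be the subsequence obtained by restricting the play to indices in $I_n \cap (\alpha+1)$ (reindexed by the order type). Set $F((\mC_\xi)_{\xi\le\alpha}) := F_n((\mC'_\beta)_{\beta\le\alpha_n})$; this is legitimate, since $F_n$ picks a member of the last cover played, which is $\mC_\alpha$.

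It remains to verify that $F$ wins. Given a full play, for each fixed $n$, the moves indexed by $I_n$ form, after reindexing, a legal play in $G((V_x^n)_{x\in X})$ in which player II follows $F_n$; hence for every $x \in X$ there exists $\xi \in I_n$ with $V_x^n \cap C_\xi \ne \emptyset$. Now take any nonempty open $U \subseteq X$. Pick $x \in U$, and use first countability to choose $n$ with $V_x^n \subseteq U$. Applying the previous observation for this particular $x$ and $n$ yields some $\xi \in I_n$ with $\emptyset \ne V_x^n \cap C_\xi \subseteq U \cap C_\xi$. Thus $\bigcup_{\xi<\omega_1} C_\xi$ meets every nonempty open set, i.e., it is dense, so player II wins. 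I do not expect any significant obstacle here beyond finding the right bookkeeping; the conceptual point is that the game $G((V_x)_{x\in X})$ captures density against one neighborhood assignment at a time, and first countability plus the length $\omega_1$ let us “diagonalize” over countably many assignments simultaneously.
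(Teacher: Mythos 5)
Your proposal is correct and follows essentially the same route as the paper: split the $\omega_1$ innings into countably many subsets of order type $\omega_1$ (via $\alpha=\lambda+n$), run the winning strategy for $G((V_x^n)_{x\in X})$ on the $n$-th subset, and use first countability to turn ``meets some $V_x^n$'' into density of $\bigcup_\xi C_\xi$. The paper's proof is the same diagonalization, just phrased as ``player II plays following $\sigma_n$ at innings $\lambda+n$, considering only the previous moves where $\sigma_n$ was used.''
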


\begin{proof}
  For every $x \in X$, let $(V_n^x)_{n \in \w}$ be a local base
 at $x$. For each $n \in \w$, let $\sigma_n$ be a winning
strategy  in the game $G((V_n^x)_{x \in X})$. Let us define a
strategy   for player II in the $\gone^{\w_1}(\O, \D)$. In the
first
inning, player II plays following $\sigma_0$. Then, at inning $n
\in \w$, player II plays following $\sigma_n$, pretending that
this is the first inning. For each limit ordinal $\xi < \w_1$,
player II plays following $\sigma_0$, considering only the
previous  moves where $\sigma_0$ was used. Then, for $\xi + n$,
player II plays following $\sigma_n$, considering only the
previous  moves where $\sigma_n$ was used.

  Let us show that this is a winning strategy. Suppose not. Then
there is an $x \in X$ such that $x \notin \overline{\bigcup_{\xi
< \w_1} C_\xi}$, where $C_\xi$ is the open set choose by II in
the $\xi$-th inning. Then, there is an $n \in \w$ such that
$V_n^x \cap \bigcup_{\xi < \w_1} C_\xi = \emptyset$. This is a
contradiction, since there is a limit ordinal $\xi < \w_1$ such
that $C_{\xi + n} \cap V_n^x \neq \emptyset$ because $\sigma_n$
is a winning strategy.
\end{proof}

It may look that finding a winning strategy for player II in the
$G((V_x)_{x \in X})$ is much easier then finding a winning
strategy for player II in $\gone^{\w_1}(\O, \D)$. We will show
that
in two of the most simple cases, it just does not make any
difference.

\begin{definition}
  Let $X$ be a topological space. We call the {\bf (open
neighborhood assignment)-weight of $X$} (ona-$w(X)$) the least
cardinal $\kappa$ such that for every open neighborhood
assignment $(V_x)_{x \in X}$, there is an open neighborhood
assignment refinement $(W_x)_{x \in X}$ (\emph{i.e.}, for every
$x$, $x \in W_x \subset V_x$) such that $|\{W_x: x \in X\}| \leq
\kappa$.
\end{definition}

\begin{proposition}
  Let $X$ be a topological space. Then $w(X) =$ ona-$w(X)
\chi(X)$.
\end{proposition}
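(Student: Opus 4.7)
The plan is to prove both inequalities $\text{ona-}w(X)\cdot\chi(X)\le w(X)$ and $w(X)\le\text{ona-}w(X)\cdot\chi(X)$.

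For the easy direction, let $\mB$ be a base for $X$ with $|\mB|=w(X)$. Then $\chi(X)\le w(X)$ is standard: for each $x$, the subfamily $\{B\in\mB:x\in B\}$ is a local base at $x$ of cardinality $\le w(X)$. For $\text{ona-}w(X)\le w(X)$, given any open neighborhood assignment $(V_x)_{x\in X}$, choose for each $x$ some $W_x\in\mB$ with $x\in W_x\subset V_x$; this is a refining open neighborhood assignment with $\{W_x:x\in X\}\subset\mB$, so of cardinality $\le w(X)$.

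For the harder direction, set $\kappa=\text{ona-}w(X)$ and $\chi=\chi(X)$. For every $x\in X$ fix a local base $\{V^x_\alpha:\alpha<\chi\}$ at $x$ (with repetitions allowed if needed). For each fixed $\alpha<\chi$ the family $(V^x_\alpha)_{x\in X}$ is an open neighborhood assignment, so by definition of $\text{ona-}w$ we may pick a refining assignment $(W^x_\alpha)_{x\in X}$ with $x\in W^x_\alpha\subset V^x_\alpha$ for each $x$, and such that $\mB_\alpha:=\{W^x_\alpha:x\in X\}$ has cardinality at most $\kappa$. Put $\mB=\bigcup_{\alpha<\chi}\mB_\alpha$, so $|\mB|\le\kappa\cdot\chi$.

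It remains to check $\mB$ is a base: given an open set $U$ and a point $y\in U$, use the fact that $\{V^y_\alpha:\alpha<\chi\}$ is a local base at $y$ to pick $\alpha$ with $y\in V^y_\alpha\subset U$; then $W^y_\alpha\in\mB_\alpha\subset\mB$ satisfies $y\in W^y_\alpha\subset V^y_\alpha\subset U$. This shows $w(X)\le\kappa\cdot\chi$, completing the proof. There is no real obstacle here; the only subtle point is to notice that one must apply the $\text{ona-}w$ assumption $\chi$-many times, once for each index of the local base, rather than trying to cover all neighborhood assignments simultaneously.
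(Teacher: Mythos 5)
Your proof is correct and follows essentially the same route as the paper: the paper also dismisses the inequality ona-$w(X)\chi(X)\le w(X)$ as trivial, and for the converse fixes local bases $(V_\xi^x)_{\xi<\chi(X)}$, applies the ona-$w$ definition once for each index $\xi$ to get refining assignments $(W_\xi^x)_{x\in X}$, and checks that the union of the families $\{W_\xi^x: x\in X\}$ is a base of size at most ona-$w(X)\chi(X)$. Your write-up just makes the easy direction and the bookkeeping more explicit.
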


\begin{proof}
   Trivially, ona-$w(X) \chi(X) \leq w(X)$. For every $x \in X$,
let $(\mV_\xi^x)_{\xi < \chi(X)}$ be a local base for $x$. Then,
for every $\xi < \chi(X)$, let $(W_\xi^x)_{x \in X}$ be an open
neighborhood assignment refinement of $(V_\xi^x)_{x \in X}$ such
that $|\{W_\xi^x: x \in X\}| \leq$ ona-$w(X)$. Note that $B =
\bigcup_{\xi < \chi(X)} \{W_\xi^x: x \in X\}$ is such that $|B|
\leq$ ona-$w(X)\chi(X)$. We will show that $B$ is a base for $X$.
Let $V$ be an non-empty set. Let $x \in V$. Then there is an
$V_\xi^x \subset V$. Thus, $x \in W_\xi^x \subset V$.
\end{proof}

\begin{corollary}
  If $X$ is a first countable space, $w(X) =$ ona-$w(X)$.
\end{corollary}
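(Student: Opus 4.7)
The plan is to deduce this immediately from the preceding proposition, which asserts $w(X) = \text{ona-}w(X)\cdot \chi(X)$. The first-countability hypothesis translates to $\chi(X)\le\aleph_0$, so substituting into that identity yields $w(X) = \text{ona-}w(X)\cdot\aleph_0$, and the result follows as soon as one argues that the factor of $\aleph_0$ is absorbed into $\text{ona-}w(X)$.

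More concretely, I would open the proof by invoking the proposition and recording the first-countability reduction as a single line. For the absorption step, I would rely on the standard convention in cardinal-function topology that all such invariants are taken to be at least $\aleph_0$, so that $\text{ona-}w(X)\ge\aleph_0$ and hence $\text{ona-}w(X)\cdot\aleph_0 = \text{ona-}w(X)$. The reverse inequality $\text{ona-}w(X)\le w(X)$ is immediate from the definition, since any base of minimum size furnishes, for a given open neighborhood assignment $(V_x)_{x\in X}$, a refinement $(W_x)_{x\in X}$ by picking for each $x$ some basic open $W_x$ with $x\in W_x\subset V_x$, and the number of distinct $W_x$ obtained is bounded by $w(X)$.

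The only real point to guard against is a degenerate reading of the conclusion when $\text{ona-}w(X)$ might a priori be finite. I would handle this either by explicitly appealing to the $\aleph_0$ convention or, to be safe, by verifying separately that for an infinite first-countable space $w(X)\ge\aleph_0$ forces $\text{ona-}w(X)\ge\aleph_0$ through the trivial containment $\text{ona-}w(X)\le w(X)$ combined with the proposition. There is no substantial obstacle here; the corollary is essentially a book-keeping consequence of the proposition, and the only thing that needs care is cardinal arithmetic.
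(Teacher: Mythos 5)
Your argument is correct and is exactly the intended one: the paper states this as an immediate consequence of the preceding proposition $w(X)=\mbox{ona-}w(X)\chi(X)$, with first countability giving $\chi(X)=\aleph_0$ and the usual convention that cardinal functions are infinite absorbing that factor. Your extra remarks (the trivial bound $\mbox{ona-}w(X)\le w(X)$ and the care about degenerate finite values) are sound but not needed beyond the substitution.
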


\begin{definition}
  Let $X$ be a topological space. We call the {\bf (open
neighborhood assignment)-density of $X$} (ona-$d(X)$), the least
cardinal $\kappa$ such that for every $(V_x)_{x \in X}$ open
neighborhood assignment, there is a subset $D \subset X$ such
that $|D| \leq \kappa$ and $D \cap V_x \neq \emptyset$ for every
$x \in X$.
\end{definition}

\begin{proposition}
  Let $X$ be a topological space. Then $d(X) \leq$
ona-$d(X)\chi(X)$.
\end{proposition}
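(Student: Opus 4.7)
The plan is to mimic the earlier proposition relating $w(X)$ to ona-$w(X)\chi(X)$: instead of building a base by refining local bases uniformly over $x$, I will build a dense set by applying the definition of ona-$d(X)$ to each level of the local base, and then union over the levels.

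More precisely, first I fix for every $x \in X$ a local base $(V_\xi^x)_{\xi < \chi(X)}$. For each ordinal $\xi < \chi(X)$ the family $(V_\xi^x)_{x \in X}$ is a legitimate open neighborhood assignment, so by definition of ona-$d(X)$ there is a set $D_\xi \subset X$ with $|D_\xi| \le$ ona-$d(X)$ that meets $V_\xi^x$ for every $x \in X$. I then put $D = \bigcup_{\xi < \chi(X)} D_\xi$, which clearly satisfies $|D| \le$ ona-$d(X)\chi(X)$.

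It remains to check that $D$ is dense. Let $U$ be a nonempty open subset of $X$ and pick any $x \in U$. Since $(V_\xi^x)_{\xi < \chi(X)}$ is a local base at $x$, there is some $\xi < \chi(X)$ with $V_\xi^x \subset U$. By the choice of $D_\xi$ we have $D_\xi \cap V_\xi^x \ne \emptyset$, so $D \cap U \supset D_\xi \cap V_\xi^x \ne \emptyset$, which shows that $D$ is dense in $X$.

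There is no real obstacle here: the argument is a direct analogue of the proof of $w(X) \le$ ona-$w(X)\chi(X)$, with ``refining the assignment and taking the union'' replaced by ``finding a small hitting set for each level and taking the union.'' The only minor care is to note that ona-$d(X)$ can be applied \emph{for each $\xi$ separately}, which is legitimate because at each $\xi$ the family $(V_\xi^x)_{x \in X}$ is itself an open neighborhood assignment.
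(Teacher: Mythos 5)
Your proof is correct and follows essentially the same route as the paper: fix local bases $(V_\xi^x)_{\xi<\chi(X)}$, apply the definition of ona-$d(X)$ to each level $(V_\xi^x)_{x\in X}$ to get a small hitting set $D_\xi$, and check that the union $D=\bigcup_{\xi<\chi(X)}D_\xi$ is dense by choosing, for a given nonempty open $V$ and $x\in V$, a $\xi$ with $V_\xi^x\subset V$. Nothing is missing.
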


\begin{proof}
  For each $x \in X$, let $(V_\xi^x)_{\xi < \chi(X)}$ be a local
base for $x$. For every $\xi < \chi(X)$, let $D_\xi \subset X$ be
such that $|D_\xi| \leq$ ona-$d(X)$ and $D_\xi \cap V_\xi^x \neq
\emptyset$ for every $x \in X$. Note that $D = \bigcup_{\xi <
\chi(X)} D_\xi$ is such that $|D| \leq$ ona-$d(X)\chi(X)$. We
will show that $D$ is dense. Let $V$ be a non-empty open set. Let
$x \in V$. Let $\xi < \chi(X)$ such that $V_\xi^x \subset V$.
Note that $D_\xi \cap V_\xi^x \neq \emptyset$.
\end{proof}

\begin{corollary}
  If $X$ is a first countable space, then $d(X) =$ ona-$d(X)$.
\end{corollary}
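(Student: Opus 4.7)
The plan is that this corollary is a direct consequence of the immediately preceding Proposition, exactly parallel to the earlier corollary \emph{``If $X$ is a first countable space, $w(X) = $ ona-$w(X)$''} that followed its own Proposition. So I would break the equality into two inequalities and dispatch each.

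First, for the trivial direction ona-$d(X) \leq d(X)$, I would take a dense subset $D \subset X$ with $|D| = d(X)$. Since $D$ meets every nonempty open set, in particular $D \cap V_x \neq \emptyset$ for every open neighborhood assignment $(V_x)_{x \in X}$. Hence $D$ witnesses ona-$d(X) \leq |D| = d(X)$. Note that this direction does not require first countability at all.

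For the other direction $d(X) \leq $ ona-$d(X)$, I would invoke the Proposition, which gives $d(X) \leq $ ona-$d(X) \cdot \chi(X)$. Since $X$ is first countable, $\chi(X) \leq \aleph_0$, so $d(X) \leq $ ona-$d(X) \cdot \aleph_0 = $ ona-$d(X)$, where the last equality uses the standard convention that cardinal invariants take infinite values. Combining the two inequalities yields $d(X) = $ ona-$d(X)$. There is no real obstacle here: the corollary is essentially a one-line application of the Proposition, mirroring exactly the proof of the weight-version corollary two items above.
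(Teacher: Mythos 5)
Your proof is correct and is exactly the intended argument: the paper states this corollary without proof as an immediate consequence of the preceding Proposition, relying on the trivial inequality ona-$d(X)\le d(X)$ (a dense set meets every $V_x$) together with $d(X)\le$ ona-$d(X)\chi(X)=$ ona-$d(X)$ when $\chi(X)\le\aleph_0$. Nothing further is needed.
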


\bibliographystyle{abbrv}

\def\cprime{$'$}

\end{document}